\newtheorem{theorem}{Theorem}
\newtheorem{lemma}[theorem]{Lemma}
\newtheorem{cor}[theorem]{Corollary}
\newtheorem{remark}{Remark}
\newtheorem{definition}{Definition}
\DeclareRobustCommand\widecheck[1]{{\mathpalette\@widecheck{#1}}}
\def\@widecheck#1#2{%
    \setbox\z@\hbox{\m@th$#1#2$}%
    \setbox\tw@\hbox{\m@th$#1%
       \widehat{%
          \vrule\@width\z@\@height\ht\z@
          \vrule\@height\z@\@width\wd\z@}$}%
    \dp\tw@-\ht\z@
    \@tempdima\ht\z@ \advance\@tempdima2\ht\tw@ \divide\@tempdima\thr@@
    \setbox\tw@\hbox{%
       \raise\@tempdima\hbox{\scalebox{1}[-1]{\lower\@tempdima\box
\tw@}}}%
    {\ooalign{\box\tw@ \cr \box\z@}}}
\newcommand{\ep}{\epsilon}
\newcommand{\om}{\omega}
\DeclareMathOperator{\Res}{Res}
\renewcommand{\Im}{\textrm{Im}}
\newcommand{\ds}{\displaystyle}
\newcommand{\de}{\delta}
\newcommand{\be}{\begin{equation}}
\newcommand{\beq}{\begin{equation}}
\newcommand{\ee}{\end{equation}}
\newcommand{\eeq}{\end{equation}}
\newcommand{\bes}{\begin{equation*}}
\newcommand{\ees}{\end{equation*}}
\newcommand{\mand}{\quad \text{and}\quad}
\newcommand{\R}{{\bf{R}}}
\newcommand{\C}{{\bf{C}}}
\newcommand{\Z}{{\bf{Z}}}
\newcommand{\E}{{\mathcal{E}}}
\renewcommand{\O}{{\mathcal{O}}}
\renewcommand{\tilde}{\widetilde}
\newcommand{\bunderbrace}[2]{%
  \begin{array}[t]{@{}c@{}}
  \underbrace{#1}\\
  #2
  \end{array}
}
\title{Mass-in-Mass Lattices with Small Internal Resonators}
\author{Fazel Hadadifard}
\address{Department of Mathematics, Drexel University, Philadelphia PA. {\it fh352@drexel.edu}}
\author{J. Douglas Wright}
\address{Department of Mathematics, Drexel University, Philadelphia PA. {\it jdw66@drexel.edu}}
\begin{document}
\maketitle
\begin{abstract} We consider 
the mass-in-mass (MiM) lattice when the internal resonators are very small. When
there are no internal resonators the lattice reduces to a standard 
Fermi-Pasta-Ulam-Tsingou (FPUT) system. We show that the solution of the MiM system,
with suitable initial data, shadows the FPUT system for long periods of time. Using 
some classical oscillatory integral estimates we can conclude that the error of the approximation is (in some settings) higher than one may expect.
\end{abstract}

{\bf Keywords:} Fermi-Pasta-Ulam-Tsingou, mass-in-mass lattices, model equations justification, energy estimates.

\section{The problem}

We consider the {\it mass-in-mass} (MiM) variant of the {\it Fermi-Pasta-Ulam-Tsingou}
(FPUT) lattice:
infinitely many particles of unit mass (indexed by $j \in \Z$) are arranged on a line, each connected to its nearest neighbors by a ``spring'' with potential energy function $V$ (which we assume  is smooth\footnote{In this paper, when we say ``smooth'' we always mean $C^\infty$.} and has $V(0) =V'(0) = 0<k:= V''(0)$). The displacement of the $j^{\text{th}}$ particle is $U_j$.
 Additionally, each particle is connected by a linear spring (with spring constant $\kappa$) to an internal resonator (of mass $\mu$). The displacement of the $j^{\text{th}}$ resonator is $u_j$.  The equations of motion can be found using Newton's second law:
\be\label{MIM}
\begin{split}
 \ddot{U}_j&= V'(U_{j+ 1}- U_j)- V'(U_j- U_{j-1})+ \kappa (u_j- U_j)\\
 \mu \ddot{u_j}&= \kappa (U_j- u_j).
\end{split}
\ee
These sorts of lattices have been the subject of quite a bit of research of late, in large part because engineers have found a wide variety of applications for apparatus which are modeled by MiM systems.  Applications range from shock absorption \cite{gantzounis} to remote sensing \cite{spadoni} and in areas from medicine \cite{bones} to materials science \cite{cement}.

 Our interest is analytical and in this article we investigate the dynamics of \eqref{MIM} when $0 < \mu \ll 1$, that is when the internal resonators have small mass. 
When $\mu = 0$ the second equation implies $u_j = U_j$ and the first becomes 
\be\label{FPUT}
\ddot{U_j}= V'(U_{j+ 1}- U_j)- V'(U_j- U_{j-1}).
\ee
These are the equations of motion for the standard FPUT. It takes little insight to conjecture that  solutions of \eqref{MIM} shadow solutions of \eqref{FPUT}
when $\mu$ is small. We prove a quantitative version of such a conjecture. However this is not a straightforward result: since $\mu$
 multiplies the highest order derivative in \eqref{MIM}, the problem is one of singular perturbation.  We also find something rather surprising: by slightly adjusting the potential in \eqref{FPUT} and adding some restrictions to the initial conditions for the internal resonators, we can improve the accuracy of the approximation 
 by more than an order of magnitude.

Before getting into the weeds, we make some remarks on a recent spate of articles on MiM and FPUT lattices and how they relate to our work. First we mention the article \cite{KSX} by Kevrekidis, Stefanov \& Xu. The authors use a variational argument to show that for the degenerate Hertzian potential $V_H(h) := h_+^{5/2}$, there exists a countable number of choices for the internal mass $\mu$, converging to zero, for which the MiM system admits
spatially localized traveling wave solutions. 
This work was extended by Faver, Goodman \& Wright in \cite{FGW} to apply to more general, but non-degenerate, potentials. Again, for a sequence of choices of $\mu$ converging to zero, there are spatially localized traveling waves. The argument in \cite{FGW} is perturbative and in particular, uses the $\mu =0$ FPUT traveling wave as the point of bifurcation. In \cite{F}, Faver proves that away from the countable collection of masses, the traveling waves are not spatially localized but instead converge at infinity to very small amplitude periodic waves, {\it i.e.}~nanopterons \cite{boyd}. The point here is that despite the relative simplicity of the system \eqref{MIM}, from the standpoint of traveling wave solutions, the system depends subtly on the mass of the internal resonators. This paper is, in part,
an attempt to address similar issues for the Cauchy problem.
We also mention the article \cite{PS} by Pelinovsky \& Schneider. In that paper the authors treat a diatomic FPUT lattice in the limit that the mass ratio tends to zero. They prove that the small mass ratio lattice is well-approximated by the limiting monatomic FPUT lattice. Their result directly inspired our work here. See Remark \ref{PS remark} for a more thorough comparison of their work and ours.

 \section{First order reformulation and existence of solutions}
 Let
 $$
 R_j := U_{j+1} - U_j, \quad P_j := \dot{U}_j, \quad r_j := u_j - U_j \mand p_j := \dot{u}_j.
 $$
 The variables are (in order): the relative displacement between adjacent external particles; the velocity of the external particles; the relative displacement between the internal resonators and their hosts; the velocity of the internal resonators. In these coordinates \eqref{MIM} reads:
 \begin{equation}\label{FOMIM}
 \begin{split}
 \dot{R} &= \delta^+ P\\
 \dot{P} &= \delta^- [V'(R)] + \kappa r\\
 \dot{r} &= p-P\\
 \mu \dot{p} &= -\kappa r.
 \end{split}
 \end{equation}
We suppress dependence on the lattice site $j$ and use the notation $(\delta^\pm q)_j := \pm(q_{j\pm1}-q_j)$.
In fact \eqref{FOMIM} is in classical hamiltonian form, though since we do not utilize this feature very strongly,
 we do not elaborate.

We view \eqref{FOMIM} as an ODE on the Hilbert space $(\ell^2)^4$. The right hand side can easily be shown to be a smooth map in that topology and thus the Cauchy problem is well-posed by Picard's theorem and solutions exist for at least short periods of time.
In fact solutions exist for all $t$, at least if they are initially not too big.
Before we state the result, we need to define
an appropriate norm for solutions.
Let \be\label{mu norm}
\| (R,P,r,p) \|_\mu :=\sqrt{ {k \over 2} \| R\|^2+
{1 \over 2} \|P\|^2
+ {\kappa \over 2}  \|r\|^2
+ {\mu \over 2} \| p\|^2}.
\ee
Here and throughout we use $$\| \cdot \| := \|\cdot \|_{\ell^2}.$$ 
The norm $\| \cdot \|_\mu$ is just a scaling of the usual  $(\ell^2)^4$ norm and is equal to the (square root of the) mechanical energy of the linearization of \eqref{FOMIM}; recall that $k := V''(0)$.

For a solution $(R,P,r,p)$ of \eqref{FOMIM}, let
$$
H(t):=\sum_{j \in \Z} \left(V(R_j) + {1 \over 2} P_j^2 + {1 \over 2} \kappa r_j^2 + {1 \over 2} \mu p_j^2\right).
$$
If finite at $t =0$, this quantity is constant for all $t$: it is just the mechanical energy of the lattice. 
Here is the calculation:
\bes\begin{split}
\dot{H}(t) 
= &\sum_{j \in \Z} V'(R_j)\dot{R_j} + P_j \dot{P}_j + \kappa r_j \dot{r}_j +\mu p_j \dot{p}_j\\
= &\sum_{j \in \Z} V'(R_j)(\delta^+ P)_j + P_j( (\delta^- [V'(R)])_j + \kappa r_j) + \kappa r_j (p_j - P_j)-\kappa p_j r_j\\
= & 0.
\end{split}\ees
Since $\dot{H}(t) = 0$, $H(t)$ is constant.
In the above we have made liberal use of the summation by parts identity,
$\ds\sum_{j \in \Z} (\delta^+ f)_j g_j = -\sum_{j \in \Z}  f_j {(\delta^- g)}_j.$

The conservation of energy is crucial for proving:
\begin{theorem}\label{exist}
Fix $\kappa>0$ and assume that $V: \R \to \R$ is smooth  with $V(0)~=~ V'(0)~=~0$
and $V''(0) =: k > 0$.
There exists $\rho_*=\rho_*(V)>0,$ such that, for any $\mu > 0$, if 
$$
\|(R_0,P_0,r_0,p_0)\|_\mu \le \rho_*
$$
then the unique solution of the MiM lattice \eqref{FOMIM} with initial data $(R_0,P_0,r_0,p_0)$ exists for all $t \in \R$ and
\bes
\|(R(t),P(t),r(t),p(t))\|_\mu\le 2\|(R_0,P_0,r_0,p_0)\|_\mu.
\ees

\end{theorem}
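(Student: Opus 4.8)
The plan is to combine the local existence and uniqueness already supplied by Picard's theorem with the conservation of $H$, via a continuity (bootstrap) argument, the only subtlety being that every constant must be chosen uniformly in $\mu$. Write $N(t) := \|(R(t),P(t),r(t),p(t))\|_\mu$. Since the right-hand side of \eqref{FOMIM} is smooth on $(\ell^2)^4$ (hence Lipschitz on bounded sets, for each fixed $\mu>0$), the standard continuation theorem says a maximal solution is either global or has $N(t)$ blowing up in finite time, so it suffices to produce an a priori bound on $N$. The conserved energy $H(t)$ is the natural source of such a bound, but $H$ is not literally $N^2$: it contains $\sum_j V(R_j)$ rather than $\tfrac{k}{2}\sum_j R_j^2$. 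The heart of the matter is therefore a comparison of $H$ with $N^2$ on a small ball, and the weighted norm $\|\cdot\|_\mu$ is engineered so that this comparison degrades not at all as $\mu \to 0$: the $p$-contributions to $H$ and to $N^2$ agree exactly, and the only discrepancy, coming from the $R$-terms, involves no $\mu$.

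First I would record the elementary analytic lemma behind the comparison. Because $V$ is smooth with $V(0)=V'(0)=0$ and $V''(0)=k$, Taylor's theorem provides $C=C(V)>0$ and $\eta_0=\eta_0(V)>0$ with $\big|V(h)-\tfrac{k}{2}h^2\big|\le C|h|^3$ for $|h|\le\eta_0$. Combined with the embedding $\|R\|_{\ell^\infty}\le\|R\|_{\ell^2}\le\sqrt{2/k}\,N$, this gives: whenever $N\le\rho$ and $\sqrt{2/k}\,\rho\le\eta_0$, summing the pointwise bound over $j\in\Z$ yields
\[
(1-\theta(\rho))\,N^2 \;\le\; H \;\le\; (1+\theta(\rho))\,N^2,
\]
where $\theta(\rho):=(2C/k)\sqrt{2/k}\,\rho$, so $\theta(\rho)\to0$ as $\rho\to0$, and ---the point--- $\theta$ does not depend on $\mu$.

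Now the bootstrap. Choose $\rho_*>0$ small enough that $2\sqrt{2/k}\,\rho_*\le\eta_0$ and $\theta(2\rho_*)\le\tfrac12$. Given data with $N(0)\le\rho_*$, let $[0,T_{\max})$ be the maximal forward interval of existence and suppose, for contradiction, that $N$ reaches $2\rho_*$; by continuity there is a least $\tau\in(0,T_{\max})$ with $N(\tau)=2\rho_*$, and $N\le2\rho_*$ on $[0,\tau]$, so the comparison above applies throughout $[0,\tau]$. Using $H(\tau)=H(0)$,
\[
N(\tau)^2 \;\le\; \frac{H(\tau)}{1-\theta(2\rho_*)} \;=\; \frac{H(0)}{1-\theta(2\rho_*)} \;\le\; \frac{1+\theta(2\rho_*)}{1-\theta(2\rho_*)}\,N(0)^2 \;\le\; 3\rho_*^2 \;<\; 4\rho_*^2,
\]
contradicting $N(\tau)=2\rho_*$. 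Hence $N\le2\rho_*$ on all of $[0,T_{\max})$, the orbit remains in a bounded subset of $(\ell^2)^4$, and therefore $T_{\max}=\infty$; running the same argument backward gives existence on $(-\infty,0]$ as well. Finally, since $2N(0)\le2\rho_*$ is itself an admissible radius in the comparison, the same chain of inequalities with $2\rho_*$ replaced by $2N(0)$ gives $N(t)^2\le 3N(0)^2\le 4N(0)^2$, i.e. $N(t)\le2N(0)$ for all $t\in\R$, which is the asserted bound.

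The main obstacle ---really the only point demanding care, since this is the soft part of the analysis--- is the uniformity in $\mu$. One must resist expressing $H$ through the unweighted $(\ell^2)^4$ norm, whose $\|p\|^2$-coefficient is $\mu$: every constant would then blow up as $\mu\to0$ and the theorem (which asserts a single $\rho_*$ valid for all $\mu>0$) would be lost. Working throughout with $\|\cdot\|_\mu$ removes this entirely, leaving only the standard FPUT-type energy argument, in which one just has to make sure the bootstrap closes with room to spare (the $3<4$ above) rather than at the borderline.
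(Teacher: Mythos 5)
Your proof is correct and follows essentially the same route as the paper's: compare $H$ with $\|\cdot\|_\mu^2$ on a small ball using Taylor's theorem, invoke conservation of $H$, and close the argument with a continuity/bootstrap step, with all constants manifestly $\mu$-independent because you work with $\|\cdot\|_\mu$ throughout. The only cosmetic difference is that you derive a sharper, $\rho$-dependent equivalence $(1\pm\theta(\rho))N^2$ where the paper uses a fixed factor-of-two equivalence; both lead to the same factor $2$ in the final estimate.
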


\begin{proof} The hypotheses on $V$ imply, by way of Taylor's Theorem, the existence of 
$\sigma_*>0$ for which
$
| h | \le \sigma_*$ implies $\ds {k \over 4} h^2 \le V(h) \le k h^2.$
So if $\| R \|_{\ell^\infty} \le \sigma_*$ we have
$\ds
\sum_{j \in \Z} {{k \over 4} }R_j^2 \le \sum_{j \in \Z} V(R_j) \le 
\sum_{j \in \Z} k R_j^2.
$
This in turn implies
\be\label{picardequiv1}
{1 \over 2} \|(R,P,r,p)\|^2_\mu
\le H \le 2 \|(R,P,r,p)\|^2_\mu
\ee
when \be\label{condition}
\|R\|_{\ell^\infty} \le \sigma_*.
\ee
That is to say when \eqref{condition} holds, $\sqrt{H}$ and $\|(R,P,r,p)\|_\mu$ are equivalent.

Since $H$ is constant, \eqref{picardequiv1} gives us:
\bes
{1 \over 2} \|(R(t),P(t),r(t),p(t))\|^2_\mu
\le H(t) = H(0)  \le 2 \|(R_0,P_0,r_0,p_0)\|^2_\mu.
\ees
If we cut out the middle terms and do some simple algebra we arrive at
\be\label{goal}
\|(R(t),P(t),r(t),p(t))\|_\mu \le 2 \|(R_0,P_0,r_0,p_0)\|_\mu.
\ee
This is the final estimate in the theorem but we are not yet done. The reason is that
\eqref{goal} only holds for those values of $t$ where \eqref{condition} is true.

By restricting the initial data, we can ensure that \eqref{condition} holds for all $t$
and thus so does \eqref{goal}. Here is the argument.
We have the ``$\ell^2 \subset \ell^\infty$ embedding estimate''
$\| R\|_{\ell^\infty} \le \| R \|$.
Moreover, the definition of $\|(R,P,r,p) \|_\mu$ implies
$\ds 
\|R\| \le \sqrt{2/k} \|(R,P,r,p)\|_\mu.
$
Putting these together with \eqref{condition} we see that we have \eqref{goal} for those
$t$ when 
\be\label{better condition}
 \|(R(t),P(t),r(t),p(t))\|_\mu \le \sqrt{k \over 2}\sigma_*.
\ee

Now assume
\be\label{rhostar}
 \|(R_0,P_0,r_0,p_0)\|_\mu \le  {1 \over 4}\sqrt{ k \over 2} \sigma_*=:\rho_*. 
\ee
Thus \eqref{better condition} holds initially and the inequality is
strict.
The solution of \eqref{FOMIM} with 
this initial data either satisfies \eqref{better condition} for all $t \in \R$ (in which case we have
\eqref{goal} for all $t \in \R$ and we are done) or it does not. 

If it does not then, because
the solution is continuous in $t$, there is a time $t_1$ for which 
\be\label{doom}
\|(R(t_1),P(t_1),r(t_1),p(t_1))\|_\mu = \sqrt{k \over 2}\sigma_*.\ee
But note that at this time
\eqref{better condition} is met and so we have \eqref{goal}.
Putting \eqref{doom}, \eqref{goal} and \eqref{rhostar} together we obtain
$$
\sqrt{k \over 2}\sigma_*=\|(R(t_1),P(t_1),r(t_1),p(t_1))\|_\mu 
\le 2  \|(R_0,P_0,r_0,p_0)\|_\mu \le{1 \over 2} \sqrt{k \over 2}\sigma_*.
$$
This is an absurdity and thus \eqref{better condition} is met for all $t$ and we are done.

\end{proof}

 \section{The approximation theorem}
 In this section we prove a general approximation theorem for \eqref{FOMIM}.
 Once this is done, we will turn our attention to the specific problem of approximating MiM by FPUT.
 
 For any function
 $$
 \tilde{\Phi}_j(t)= (\tilde{R}_j(t),\tilde{P}_j(t),\tilde{r}_j(t),\tilde{p}_j(t))
$$
define the {\it residuals} 
\be\label{res des}\begin{split}
\Res_1(\tilde{\Phi}) &:= \de^+ \tilde{P} - \dot{\tilde{R}}\\
\Res_2(\tilde{\Phi}) &:= \de^-[V'(\tilde{R})] + \kappa r- \dot{\tilde{P}}\\
\Res_3(\tilde{\Phi}) &:= \tilde{p}  - \tilde{P} - \dot{\tilde{r}}\\
\Res_4(\tilde{\Phi}) &:= -\kappa\tilde{r} - \mu \dot{\tilde{p}}.
\end{split}\ee
The residuals are identically zero if and only if
$\tilde{\Phi}$ solves \eqref{FOMIM}. Our result
gives sufficient conditions on $\tilde{\Phi}$ so that the smallness of the
residuals implies solutions of \eqref{FOMIM} are well-approximated by $\tilde{\Phi}$.

\begin{definition}\label{hyp}
We say $
\left\{ \tilde{\Phi}^\mu= (\tilde{R}^\mu,\tilde{P}^\mu,\tilde{r}^\mu,\tilde{p}^\mu)
\right \}_{\mu \in (0,\mu_0] }
$
is {\bf a family of good approximators of $\O(\mu^N)$ for \eqref{FOMIM} on the interval
$[-T_*,T_*]$} if the following occur.

First,
$$
\left\{ \tilde{\Phi}^\mu \right\}_{\mu \in(0,\mu_0]}
\subset C^1([-T_*,T_*];(\ell^2)^4).
$$

Second, the residuals are small: there exists $C_0>0$
for which $\mu \in (0,\mu_0]$ implies
\be\label{res is small}\tag{D1}
\sup_{|t|\le T_*}\sqrt{\|\Res^{\mu}_1(\tilde{\Phi}^\mu) \|^2+
\|\Res^{\mu}_2(\tilde{\Phi}^\mu)\|^2+
\|\Res^{\mu}_3(\tilde{\Phi}^\mu)\|^2+
{1 \over {\mu}} \|\Res^{\mu}_4(\tilde{\Phi}^\mu)\|^2 } \le C_0 \mu^{N}.
\ee

Lastly, $\tilde{R}^\mu$ and $\partial_t \tilde{R}^\mu$ are not too big: there exist $\alpha_*$, $\beta_*>0$
so that
$\mu \in (0,\mu_0]$ implies
\be\label{R is not big}\tag{D2}
\sup_{|t|\le T_*} \| \tilde{R}^\mu\|_{\ell^\infty} \le \alpha_* \mand \sup_{|t|\le T_*} \left\| \partial_t \tilde{R}^\mu\right\|_{\ell^\infty}\le \beta_*.
\ee
We additionally require that 
\be\label{alpha condition}\tag{D3}
\alpha_* \le
\sup\left\{ \alpha: V''([-\alpha,\alpha]) \subset [k/2,2k] \right\}.
\ee

\end{definition}

Here is our result:
\begin{theorem}
\label{gen approx} Fix $\kappa>0$ and assume that $V: \R \to \R$ is smooth with $V(0)~=~ V'(0)~=~0$
and $V''(0) =: k > 0$. Suppose that $
\left\{ \tilde{\Phi}^\mu= (\tilde{R}^\mu,\tilde{P}^\mu,\tilde{r}^\mu,\tilde{p}^\mu)
\right \}_{\mu \in (0,\mu_0] }
$
is a family of good approximators of $\O(\mu^N)$ for \eqref{FOMIM} on the interval
$[-T_*,T_*]$,  where $N>0$.

Then, for all $K_*>0$, there exists positive constants $\mu_*$ and $C_*$ such
that the following holds when $\mu \in (0,\mu_*]$.
If
\be\label{ic}
\| \Phi^\mu_0 - \tilde{\Phi}^\mu(0)\|_\mu
 \le K_* \mu^N
\ee
and $\Phi^\mu$ 
 is the solution of \eqref{FOMIM} with initial data $\Phi^\mu_0$ then
\be\label{approx}
\| \Phi^\mu(t) - \tilde{\Phi}^\mu(t)\|_\mu
 \le C_* \mu^N
\ee
for all $t \in [-T_*,T_*]$.

That is to say, if $\Phi^\mu$ and $\tilde{\Phi}^\mu$ are initially $\O(\mu^N)$ close
then they are $\O(\mu^N)$ close on all of $[-T_*,T_*]$.

\end{theorem}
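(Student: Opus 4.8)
The plan is to control the error $\bar\Phi:=\Phi^\mu-\tilde\Phi^\mu$ by an energy estimate, closed off with a continuous-induction (bootstrap) argument of exactly the kind that finishes the proof of Theorem~\ref{exist}. Write the components of $\bar\Phi$ as $(\bar R,\bar P,\bar r,\bar p)$ and abbreviate $R:=R^\mu=\tilde R^\mu+\bar R$, $\tilde R:=\tilde R^\mu$, and likewise for the other letters. Subtracting the definitions \eqref{res des} from \eqref{FOMIM} shows that $\bar\Phi$ solves a \emph{forced} copy of the MiM system,
\bes
\begin{split}
\dot{\bar R} &= \de^+\bar P + \Res_1(\tilde\Phi^\mu),\\
\dot{\bar P} &= \de^-\big[V'(R)-V'(\tilde R)\big] + \kappa\bar r + \Res_2(\tilde\Phi^\mu),\\
\dot{\bar r} &= \bar p - \bar P + \Res_3(\tilde\Phi^\mu),\\
\mu\dot{\bar p} &= -\kappa\bar r + \Res_4(\tilde\Phi^\mu),
\end{split}
\ees
the residuals now playing the role of external forcing. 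For the energy I would take a small modification of $\|\bar\Phi\|_\mu^2$, replacing the quadratic $\tfrac k2\|\bar R\|^2$ by the exact potential difference,
\bes
\E(t):=\sum_{j\in\Z}\Big(V(R_j)-V(\tilde R_j)-V'(\tilde R_j)\bar R_j\Big)+\tfrac12\|\bar P\|^2+\tfrac12\kappa\|\bar r\|^2+\tfrac12\mu\|\bar p\|^2.
\ees
By the Taylor-expansion argument already used in the proof of Theorem~\ref{exist}, together with \eqref{R is not big}, \eqref{alpha condition} and continuity of $V''$, the $j$-th summand sits between fixed positive multiples of $\bar R_j^2$ as long as $\|R\|_{\ell^\infty}$ stays inside a fixed neighbourhood of $[-\alpha_*,\alpha_*]$; on that range of times $\E$ is therefore comparable to $\|\bar\Phi\|_\mu^2$, so it suffices to bound $\E$. (The cruder choice $\E=\|\bar\Phi\|_\mu^2$ also works; the modification is what brings the bound on $\partial_t\tilde R$ from \eqref{R is not big} into play and gives a Gronwall rate governed by $\beta_*$ rather than by $k$ --- which is what the sharper estimates later in the paper require.)

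Differentiating $\E$ along the error equations, three cancellations occur. The purely linear couplings vanish in pairs --- $\kappa\langle\bar P,\bar r\rangle$ against $\kappa\langle\bar r,-\bar P\rangle$, and $\kappa\langle\bar r,\bar p\rangle$ against $\langle\bar p,-\kappa\bar r\rangle$ --- which is exactly why the weights in $\|\cdot\|_\mu$, the factor $\mu$ on $\|\bar p\|^2$ included, are what they are. The nonlinear $\bar R$--$\bar P$ coupling cancels too: after one summation by parts, the piece $\langle V'(R)-V'(\tilde R),\de^+\bar P\rangle$ produced by differentiating the potential-difference term exactly annihilates $\langle\bar P,\de^-[V'(R)-V'(\tilde R)]\rangle$. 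Writing $V'(R_j)-V'(\tilde R_j)=\mathcal V_j\bar R_j$ with $\mathcal V_j:=\int_0^1 V''(\tilde R_j+s\bar R_j)\,ds$, and $\de^+\tilde P=\partial_t\tilde R+\Res_1(\tilde\Phi^\mu)$ by \eqref{res des}, the only surviving part of the potential contribution is $\sum_j\big(\mathcal V_j-V''(\tilde R_j)\big)\,\partial_t\tilde R_j\,\bar R_j$; since $\mathcal V_j-V''(\tilde R_j)=\mathcal{O}(|\bar R_j|)$ by the local Lipschitz bound on $V''$, this is at most a constant times $\|\partial_t\tilde R\|_{\ell^\infty}\|\bar R\|^2\le C\beta_*\E$, using \eqref{R is not big}.

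What remains of $\dot\E$ is a sum of residual pairings, $\langle V'(R)-V'(\tilde R),\Res_1\rangle$, $\langle\bar P,\Res_2\rangle$, $\kappa\langle\bar r,\Res_3\rangle$ and $\langle\bar p,\Res_4\rangle$. By Cauchy--Schwarz, the bound $\|V'(R)-V'(\tilde R)\|\lesssim\|\bar R\|$ (valid on the good range of $\|R\|_{\ell^\infty}$), the elementary estimates $\|\bar R\|,\|\bar P\|,\|\bar r\|\lesssim\sqrt\E$, and $\|\bar p\|=\mu^{-1/2}\|\sqrt\mu\,\bar p\|\lesssim\mu^{-1/2}\sqrt\E$, this sum is bounded by a fixed multiple of
\bes
\sqrt\E\,\Big(\|\Res_1(\tilde\Phi^\mu)\|+\|\Res_2(\tilde\Phi^\mu)\|+\|\Res_3(\tilde\Phi^\mu)\|+\mu^{-1/2}\|\Res_4(\tilde\Phi^\mu)\|\Big)\ \le\ C\,\mu^N\sqrt\E
\ees
by \eqref{res is small}; notice that the factor $\mu^{-1/2}$ on the last residual is exactly compensated by the weight $\mu^{-1}$ attached to $\|\Res_4\|^2$ in \eqref{res is small}. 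Putting the last two paragraphs together gives a differential inequality $\dot\E\le C_1\E+C_2\mu^N\sqrt\E$ with $C_1,C_2$ independent of $\mu$. Writing $y:=\sqrt\E$ (regularizing as $\sqrt{\E+\varepsilon}$, $\varepsilon\downarrow0$, if one worries about where $\E$ vanishes) this becomes $\dot y\le\tfrac12 C_1 y+\tfrac12 C_2\mu^N$, and Gronwall's inequality on $[-T_*,T_*]$ together with the hypothesis \eqref{ic} yields $y(t)\le e^{C_1 T_*/2}\big(K_*+C_2C_1^{-1}\big)\mu^N=:C_*\mu^N$, which is \eqref{approx}.

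The one genuine obstacle is that both the comparability of $\E$ with $\|\bar\Phi\|_\mu^2$ and the differential inequality are licensed only while $\|R(t)\|_{\ell^\infty}$ stays in that fixed neighbourhood of $[-\alpha_*,\alpha_*]$, whereas the only bound available, $\|R(t)\|_{\ell^\infty}\le\alpha_*+\sqrt{2/k}\,\|\bar\Phi(t)\|_\mu$, appeals to the very conclusion being proved --- so the argument is momentarily circular. I would break it exactly as at the end of the proof of Theorem~\ref{exist}: choose $\alpha_{**}>\alpha_*$ small enough that $V''$ still has a positive lower bound and is bounded on $[-\alpha_{**},\alpha_{**}]$ (possible by \eqref{alpha condition} and smoothness of $V$), carry out the estimates above with that enlarged interval to produce $C_*$, then shrink $\mu_*$ until $\sqrt{2/k}\,C_*\mu_*^N\le\tfrac12(\alpha_{**}-\alpha_*)$. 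Letting $\mathcal T$ be the largest $T\in[0,T_*]$ with $\|\bar R(t)\|_{\ell^\infty}\le\alpha_{**}-\alpha_*$ for all $|t|\le T$ (which is positive, by \eqref{ic} and continuity of $\bar\Phi$), the estimates apply on $[-\mathcal T,\mathcal T]$ and return $\|\bar R(t)\|_{\ell^\infty}\le\sqrt{2/k}\,\|\bar\Phi(t)\|_\mu\le\sqrt{2/k}\,C_*\mu^N\le\tfrac12(\alpha_{**}-\alpha_*)$ there --- strictly below the defining threshold. Hence $\mathcal T$ cannot be less than $T_*$, and \eqref{approx} holds on all of $[-T_*,T_*]$. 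The only bookkeeping needing care is to keep every constant independent of $\mu$ throughout, which is precisely what the $\mu$-uniform phrasing of \eqref{res is small}--\eqref{alpha condition} is designed to make possible.
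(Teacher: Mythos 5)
Your proposal is correct and follows essentially the same route as the paper: the modified energy you write down, $\E(t)=\sum_j\bigl(V(R_j)-V(\tilde R_j)-V'(\tilde R_j)\bar R_j\bigr)+\tfrac12\|\bar P\|^2+\tfrac12\kappa\|\bar r\|^2+\tfrac12\mu\|\bar p\|^2$, is exactly the paper's $E$ (the paper just names the Taylor remainder $W_j(\zeta;t)$ and works with it abstractly), the cancellation structure and residual pairings match, and the equivalence with $\|\bar\Phi\|_\mu^2$ and the bootstrap on $\|\bar R\|_{\ell^\infty}$ are the same (the paper's version of the continuation step is terser but identical in substance). The only cosmetic differences are that you bound the $\partial_t\tilde R$ contribution via a local Lipschitz estimate on $V''$ where the paper invokes Taylor with a $V'''$ bound (equivalent), and you run Gronwall on $y=\sqrt\E$ via $\dot y\le\tfrac12 C_1 y+\tfrac12 C_2\mu^N$ where the paper first applies Young's inequality to reach $\dot E\le\Gamma_*(E+\mu^{2N})$ and then runs Gronwall on $E$ (also equivalent). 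No gaps.
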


\begin{proof} 

{\it Part 1---the Error Equations:}
Let $$\Psi=(\psi_1,\psi_2,\psi_3,\psi_4):=\Phi^\mu - \tilde{\Phi}^\mu.$$
This is the error between the true solution and the approximator. 
A direct calculation shows that $\Psi$ satisfies
\be\label{Psi eqn} \begin{split}
\dot{\psi}_1 &= \de^+ \psi_2 + \Res^\mu_1(\tilde{\Phi}^\mu) \\
\dot{\psi}_2& = \de^-\left[W'(\psi_1;t) \right] + \kappa \psi_3 + \Res^\mu_2(\tilde{\Phi}^\mu)\\
\dot{\psi}_3& = \psi_4 - \psi_2+\Res^\mu_3(\tilde{\Phi}^\mu)\\
\mu\dot{\psi}_4& = - \kappa \psi_3 + \Res^\mu_4(\tilde{\Phi}^\mu)
\end{split}\ee
where
$$
W'_j(\zeta;t):=V'(\tilde{R}^\mu_j(t)+\zeta)- V'({\tilde{R}^\mu_j(t)}).
$$
Note that
$W'_j(\zeta;t) = 
 \partial_\zeta W_j(\zeta;t)$
with
\be\label{W}
W_j(\zeta;t):= V(\tilde{R}^\mu_j(t)+\zeta)- V({\tilde{R}^\mu_j(t)}) - V'(\tilde{R}^\mu_j(t)) \zeta.
\ee
We are done when we show that $\|\Psi(t) \|_\mu \le C_* \mu^N$ for $t\in [-T_*,T_*]$.

{\it Part 2---the Modified Energy:} The heart of the proof is closely related to the conservation of the energy $H$.
Let
$$
E(t):=  \sum_{j \in \Z} \left(W(\psi_1;t)+ {1\over 2} \psi^2_2 + {1 \over 2} \kappa \psi_3^2 + {1 \over 2} \mu \psi_4^2\right).
$$
This quantity is a modification of $H$ and, while it is not conserved, grows only slowly.
Below, we will show that $\sqrt{E}$  is equivalent to $\| \Psi \|_\mu$, but first
we compute its time derivative in order to develop the key energy estimate:
$$
\dot{E}(t) = \sum_{j\in\Z} \left(W'(\psi_1) \dot{\psi}_1 + \partial_t W(\psi_1;t) + \psi_2 \dot{\psi}_2+ \kappa \psi_3 \dot{\psi}_3+  \mu \psi_4 \dot{\psi}_4\right).
$$
Using \eqref{Psi eqn}
\bes
\begin{split} \dot{E}(t) = 
\sum_{j \in \Z} 
&\bigg( 
W'(\psi_1;t) \left( \de^+ \psi_2 + \Res^\mu_1(\tilde{\Phi}^\mu)\right)
+
\psi_2 \left(  \de^-\left[W'(\psi_1;t) \right] + \kappa \psi_3 + \Res^\mu_2(\tilde{\Phi}^\mu)\right)
\\
&+
\kappa \psi_3\left( \psi_4 - \psi_2+\Res^\mu_3(\tilde{\Phi}^\mu)\right)
+
\psi_4 \left(- \kappa \psi_3 + \Res^\mu_4(\tilde{\Phi}^\mu)\right) 
+
\partial_t W(\psi_1;t)  \bigg).
\end{split}
\ees
There are many cancelations:
\bes
 \dot{E}(t) = 
\sum_{j \in \Z} 
\bigg( 
W'(\psi_1;t) \Res^\mu_1(\tilde{\Phi}^\mu)
+
\psi_2 \Res^\mu_2(\tilde{\Phi}^\mu)
+ 
\kappa \psi_3 \Res^\mu_3(\tilde{\Phi}^\mu)
+ 
\psi_4 \Res^\mu_4(\tilde{\Phi}^\mu)
+
\partial_t W(\psi_1;t)  \bigg).
\ees

Using the Cauchy-Schwarz inequality, Young's inequality and 
 \eqref{res is small} we estimate the above:
\be\label{before}
\dot{E}(t) \le {1 \over 2} \|W'(\psi_1;t)\|^2 
+
{1 \over 2}\|\psi_2 \|^2
+ 
{\kappa^2  \over 2} \|\psi_3\|^2 
+ 
{\mu \over 2} \|\psi_4\|^2 + \|\partial_t W(\psi_1;t)\|_{\ell^1} + {1 \over 2} C_0^2 \mu^{2N}.
\ee
To go further than this, we need more information about $W$.

{\it Part 3---Estimates for $W$:}
Taylor's theorem tells us that for $\zeta \in \R$  we have
$$
W_j(\zeta;t) = {1 \over 2}  V''(z_j(t)) \zeta^2
$$
where $z_j(t)$ lies between $\tilde{R}^\mu_j(t)$ and $\tilde{R}^\mu_j(t)+ \zeta$.
We have assumed \eqref{R is not big} and the condition \eqref{alpha condition} on $\alpha_*$
tells us that $V''(\tilde{R}^\mu_j(t)) \in [k/2,2k]$
for $j \in \Z$, $t\in[-T_*,T_*]$ and $\mu \in (0,\mu_0]$.
Thus, since $V$ is smooth, there exists $\tau_*>0$ so that $|\zeta| \le \tau_*$ implies
$V''(z_j(t)) \in [k/4,4k]$ and as such
\be\label{yoohoo}
{k \over 8}  \zeta^2 \le W_j(\zeta;t) \le 2 k \zeta^2.
\ee

Now suppose that $\gamma \in \ell^2$ has $\|\gamma\| \le \tau_*$. Since $\ell^2 \subset \ell^\infty$ we have $\|\gamma\|_{\ell^\infty} \le \|\gamma\|$. 
Thus \eqref{yoohoo} gives us:
$$\ds
{k \over 8}  \gamma_j^2 \le W_j(\gamma_j;t) \le 2 k \gamma_j^2.
 $$
And so
\be\label{equiv heart}
\| \gamma\| \le \tau_* \implies
{k \over 8} \|\gamma \|^2 \le \sum_{j \in \Z} W_j(\gamma_j;t) \le {2k} \|\gamma \|^2.
\ee
This estimate in turn implies that, for all $t \in [-T_*,T_*]$ and $\mu \in (0,\mu_0]$,
\be\label{true}
\| \psi_1\| \le \tau_* \implies
{1 \over 4} \| \Psi \|_\mu^2 \le E(t) \le 4 \| \Psi \|_\mu^2.
\ee
This is the equivalence of $\sqrt{E}$ and $\| \Psi\|_\mu$ which was foretold.
Completely analogous calculations can be used to show that 
\be\label{eq jr}
\| \gamma\| \le \tau_* \implies
\| W'(\gamma;t)\| \le  4k \|\gamma \|.
\ee

We also need an estimate on $\partial_t W$. Computing the derivative gets:
$$
\partial_t W_j(\zeta;t)=\left[
V'(\tilde{R}_j^\mu(t) + \zeta)
-V'(\tilde{R}_j^\mu(t))
-V''(\tilde{R}_j^\mu(t))\zeta
\right] \partial_t{\tilde{R}}_j^\mu.
$$
Taylor's theorem tells us that
$$
\partial_t W_j(\zeta;t) = {1 \over 2} V'''(z_j(t)) \zeta^2
\partial_t{\tilde{R}}_j^\mu
$$
with $z_j(t)$ in between $\tilde{R}_j^\mu$ and $\tilde{R}^\mu_j + \zeta$.
Letting $\ds \beta_0:=\max_{|\rho|\le \tau_*+\alpha_*} |V'''(\rho)|$ and using 
the estimate for $\partial_t{\tilde{R}}_j^\mu$ in \eqref{R is not big} we now see that
$$
|\partial_t W_j(\zeta;t)| \le {1 \over 2} \beta_0 \beta_* \zeta^2
$$
when $|\zeta| \le \tau_*$. Thus we find that for all $t \in [-T_*,T_*]$ and $\mu \in (0,\mu_0]$
\be\label{wt}
\| \gamma\| \le \tau_* \implies
\| \partial_t W(\gamma;t)\|_{\ell^2} \le  \beta_2 \|\gamma \|^2
\ee
where $\beta_2 := \beta_0 \beta_*/2$.

{\it Part 4---Final Steps:} Applying \eqref{true}, \eqref{eq jr} and \eqref{wt} to \eqref{before}
gets us
$$
\dot{E} \le \Gamma_* \left( E+  \mu^{2N}\right)
$$
so long as $\|\psi_1\| \le \tau_*$. The constant $\Gamma_*=\Gamma_*(V,\beta_*,\kappa,C_0)>0$ is
independent of $\mu$.

We apply Gr\"onwall's inequality and get
$$
E(t) \le e^{\Gamma_* t} \left(E(0) +\mu^{2N} \right).
$$
Then we use \eqref{true} again:
$$
\|\Psi(t)\|^2_\mu \le 16 e^{\Gamma_* t} \left(\|\Psi(0)\|^2_\mu +\mu^{2N} \right).
$$

We have assumed that $\|\Psi(0)\|_\mu \le K_* \mu^{N}$ 
and we know $|t| \le T_*$ so we have
$$
\|\Psi(t)\|_\mu \le \bunderbrace{4 e^{\Gamma_* T_*/2} \sqrt{K_*^2+1}}{C_*}\mu^{N}.
$$
The constant $C_*$ does not depend on $\mu$, but the above estimate
holds only so long as $\|\psi_1\| \le \tau_*$. But we can make the right hand side of this last displayed inequality (which controls $\|\psi_1\|$) as small as we like, so this restriction is not a serious one. And so we find that there exists $\mu_* > 0$ so that $\mu \in (0,\mu_*]$ implies $\|\Psi(t)\| \le C_*\mu^N$
for all $|t| \le T_*$ and we are done with the proof.

\end{proof}

\section{The leading order FPUT approximation}\label{FPUT approx}

In \eqref{FOMIM}, if we put $\mu = 0$ we find that the last two equations become:
\begin{equation}\label{subsys 2}
r= 0 \quad \text{and} \quad p = P.
\end{equation}
That is to say, as one may expect, the internal resonators are fixed at the center of their hosting particle and their velocity $p$ is exactly equal to that of its host.
Then we put \eqref{subsys 2} into the first two equations of \eqref{FOMIM}:
\begin{equation}\label{subsystem}
\dot{R}= \de^+ P \quad \text{and} \quad
\dot{P}=  \de^- [V'(R)].
\end{equation}
Of course \eqref{subsystem} is just a vanilla monatomic FPUT lattice, equivalent to \eqref{FPUT}.
So our approximating system is 
\be
\tilde{\Phi}_{FPUT}:=(\tilde{R},\tilde{P},0,\tilde{P})
\ee
where $(\tilde{R},\tilde{P})$ solves \eqref{subsystem}. 

Now we will show that $\tilde{\Phi}_{FPUT}$ is a good approximator; note that it does not depend on $\mu$, though the residuals will.
An argument identical to that which led to Theorem \ref{exist} tells us that there is a positive constant $\rho_1$,  such that $\|\tilde{R}(0)\| + \|\tilde{P}(0)\| \le \rho_1$ implies
\be\label{FPUT control}\|\tilde{R}(t)\| + \|\tilde{P}(t)\| \le 2 \left( \|\tilde{R}(0)\| + \|\tilde{P}(0)\|\right)\ee for all $t \in \R$. 
Thus, so long as $\|\tilde{R}(0)\| + \|\tilde{P}(0)\|$ is not too big,  the conditions \eqref{R is not big} and \eqref{alpha condition} are more or less automatically met and, moreover, they hold for all $t \in \R$.

We compute directly that
$$
\Res_1(\tilde{\Phi}_{FPUT}) = 
\Res_2(\tilde{\Phi}_{FPUT}) = 
\Res_3(\tilde{\Phi}_{FPUT}) = 0
$$
and
$$
\Res_4(\tilde{\Phi}_{FPUT}) = -\mu \dot{\tilde{P}} = -\mu \de^- [V'(\tilde{R})].
$$
Thus 
\begin{multline*}
\sqrt{\|\Res^{\mu}_1(\tilde{\Phi}_{FPUT}) \|^2+
\|\Res^{\mu}_2(\tilde{\Phi}_{FPUT})\|^2+
\|\Res^{\mu}_3(\tilde{\Phi}_{FPUT})\|^2+
{1 \over {\mu}} \|\Res^{\mu}_4(\tilde{\Phi}_{FPUT})\|^2 } \\
= \sqrt{\mu}  \|\de^- [V'(\tilde{R})]\|.
\end{multline*}
Standard estimates and \eqref{FPUT control} tell us that 
$\sqrt{\mu}  \|\de^- [V'(\tilde{R})]\| \le C_0\sqrt{\mu}$ for all $t\in \R$.
So we have \eqref{res is small} with $N = 1/2$.
We now call on Theorem \ref{gen approx} and get: 

\begin{cor} \label{naive}
Let $\kappa>0$, $K_*>0$, $T_*>0$ and $V: \R \to \R$ be smooth with $V(0) = V'(0) = 0$
and $V''(0) =: k > 0$.  Then there exist
$\rho_* = \rho_*(V)>0$, $\mu_*=\mu_*(K_*,T_*,\kappa,V)>0$ and $C_* = C_*(K_*,T_*,\kappa,V)>0$ 
for which
we have the following when $\mu \in (0,\mu_*]$.

Suppose that $(\tilde{R},\tilde{P})$ solves the FPUT system \eqref{subsystem} with 
$$
\|\tilde{R}(0)\| + \|\tilde{P}(0)\| \le \rho_*$$ and
$(R,P,r,p)$ solves the MiM lattice \eqref{FOMIM} with 
$$
\|(R(0),P(0),r(0),p(0)) - (\tilde{R}(0),\tilde{P}(0),0,\tilde{P}(0))\|_\mu \le K_* \sqrt{\mu}.
$$
Then
$$
\|(R(t),P(t),r(t),p(t)) - (\tilde{R}(t),\tilde{P}(t),0,\tilde{P}(t))\|_\mu \le C_* \sqrt{\mu}
$$
for all $t \in [-T_*,T_*]$.
\end{cor}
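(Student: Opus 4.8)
The plan is to show that $\tilde{\Phi}_{FPUT}=(\tilde{R},\tilde{P},0,\tilde{P})$ is a family of good approximators of order $\O(\mu^{1/2})$ for \eqref{FOMIM} on $[-T_*,T_*]$, and then invoke Theorem \ref{gen approx} directly. The corollary is exactly the conclusion of that theorem with $N = 1/2$, so essentially all the work is in verifying the three conditions (D1)--(D3) in Definition \ref{hyp}. First I would set up the FPUT solution: given $\|\tilde{R}(0)\| + \|\tilde{P}(0)\| \le \rho_1$, an energy argument mimicking the proof of Theorem \ref{exist} (conservation of the FPUT energy $\sum_j V(\tilde{R}_j) + \frac12 \tilde{P}_j^2$, combined with the local equivalence of this energy and $\|(\tilde{R},\tilde{P})\|_{\ell^2}^2$ coming from Taylor's theorem near $0$) yields the global-in-time control \eqref{FPUT control}. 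Since $\tilde{\Phi}_{FPUT}$ does not depend on $\mu$, this control is uniform in $\mu$; the $C^1$ regularity in $(\ell^2)^4$ is immediate from the fact that the right-hand side of \eqref{subsystem} is a smooth map on $\ell^2$, so $(\tilde{R},\tilde{P}) \in C^1(\R;(\ell^2)^2)$.

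Next I would compute the residuals of $\tilde{\Phi}_{FPUT}$. Plugging $(\tilde{R},\tilde{P},0,\tilde{P})$ into \eqref{res des}: $\Res_1 = \de^+\tilde{P} - \dot{\tilde{R}} = 0$ and $\Res_2 = \de^-[V'(\tilde{R})] - \dot{\tilde{P}} = 0$ because $(\tilde{R},\tilde{P})$ solves \eqref{subsystem}; $\Res_3 = \tilde{p} - \tilde{P} - \dot{\tilde{r}} = \tilde{P} - \tilde{P} - 0 = 0$; and the only nonzero one is $\Res_4 = -\kappa \cdot 0 - \mu \dot{\tilde{p}} = -\mu \dot{\tilde{P}} = -\mu\, \de^-[V'(\tilde{R})]$. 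Hence the weighted residual norm in (D1) collapses to $\sqrt{\tfrac{1}{\mu}\|\Res_4\|^2} = \sqrt{\mu}\,\|\de^-[V'(\tilde{R})]\|$. To bound this uniformly in $t$ I would use that $\de^-$ is a bounded operator on $\ell^2$ (operator norm $\le 2$), that $V'$ is smooth with $V'(0)=0$ so $|V'(s)| \le C|s|$ for $|s|$ bounded, and the uniform bound \eqref{FPUT control} on $\|\tilde{R}(t)\|_{\ell^\infty} \le \|\tilde{R}(t)\|$; this gives $\|\de^-[V'(\tilde{R})]\| \le C_0$ for all $t \in \R$, and therefore (D1) holds with $N = 1/2$. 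For (D2): $\|\tilde{R}(t)\|_{\ell^\infty} \le \|\tilde{R}(t)\| \le 2(\|\tilde{R}(0)\|+\|\tilde{P}(0)\|) \le 2\rho_1$, and $\partial_t \tilde{R} = \de^+\tilde{P}$ so $\|\partial_t\tilde{R}(t)\|_{\ell^\infty} \le 2\|\tilde{P}(t)\| \le 4\rho_1$; both are uniform in $t$ and $\mu$. For (D3) I would shrink $\rho_1$ (equivalently, choose $\rho_*$ small depending only on $V$) so that $\alpha_* := 2\rho_1$ satisfies $V''([-\alpha_*,\alpha_*]) \subset [k/2,2k]$, which is possible since $V''$ is continuous with $V''(0)=k$.

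With (D1)--(D3) in hand, $\{\tilde{\Phi}_{FPUT}\}_{\mu \in (0,\mu_0]}$ is a family of good approximators of $\O(\mu^{1/2})$. Applying Theorem \ref{gen approx} with this family and with the given $K_*$ produces $\mu_* > 0$ and $C_* > 0$, depending on $K_*$, $T_*$, $\kappa$, $V$ (through $C_0$, $\alpha_*$, $\beta_*$, $\Gamma_*$), such that the initial closeness hypothesis $\|\Phi^\mu_0 - \tilde{\Phi}_{FPUT}(0)\|_\mu \le K_*\sqrt{\mu}$ implies $\|\Phi^\mu(t) - \tilde{\Phi}_{FPUT}(t)\|_\mu \le C_*\sqrt{\mu}$ for all $|t|\le T_*$. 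Rewriting $\tilde{\Phi}_{FPUT} = (\tilde{R},\tilde{P},0,\tilde{P})$ gives precisely the statement of the corollary.

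The proof is essentially a bookkeeping exercise once Theorem \ref{gen approx} is available; the only mildly delicate point is making sure the smallness of $\rho_*$ is chosen depending on $V$ alone (so that the global FPUT bound feeds cleanly into (D2) and (D3) with constants independent of $\mu$, $K_*$, and $T_*$), and checking that the residual estimate $\|\de^-[V'(\tilde{R})]\| \le C_0$ is genuinely time-uniform — which it is, because the FPUT energy is conserved for all time, not just on $[-T_*,T_*]$. No step here is a serious obstacle; the real content was already packaged into the general approximation theorem.
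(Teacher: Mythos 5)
Your proposal is correct and follows essentially the same route as the paper: identify $\tilde{\Phi}_{FPUT}=(\tilde{R},\tilde{P},0,\tilde{P})$ as the approximator, show $\Res_1=\Res_2=\Res_3=0$ and $\Res_4=-\mu\,\de^-[V'(\tilde{R})]$, bound the weighted residual by $C_0\sqrt{\mu}$ via the global FPUT energy estimate, verify (D2)--(D3) by shrinking $\rho_*$, and invoke Theorem \ref{gen approx} with $N=1/2$. You actually spell out the verification of (D2)--(D3) and the ``standard estimates'' for $\|\de^-[V'(\tilde{R})]\|$ a bit more explicitly than the paper, but the structure and all key steps coincide.
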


\begin{remark}\label{PS remark}
As we mentioned in the introduction, the article \cite{PS} treats the monatomic limit
of a diatomic FPUT lattice in the case of small mass ratio. Their mass ratio 
is named $\ep^2$ and is most comparable to our internal mass $\mu$. 
Their main result, Theorem 1, gives a rigorous error bound of $\O(\ep)$ on $\O(1)$
time scales. Given the comparison $\ep^2 \sim \mu$, our result here is exactly the analogous
one for MiM with small internal resonators. 

\end{remark}

\section{Higher order expansions}
The final two equations in \eqref{FOMIM} are solvable for $(r,p)$ in terms of $(R,P)$ with elementary ODE techniques. In this way we can eliminate $(r,p)$ from the system (almost)
entirely and are left with what is a perturbation of FPUT with a continuous delay term. 
This delay term can then be  approximated using classical oscillatory integral
methods.
Then we will use Theorem \ref{gen approx} to justify some of these approximations,
which are of a higher order in $\mu$ than what we saw in Corollary \ref{naive}.

\subsection{Delay equation reformulation}
Take the time derivative of the equation for $\dot{r}$ in \eqref{FOMIM} and get
\be\label{SHO}
\ddot{r} = -\omega_\mu^2r -\dot{P}
\ee
where $$\omega_\mu := \sqrt{\kappa/\mu}.$$
We solve \eqref{SHO} using variation of parameters:
\bes\begin{split}
r_j(t) = &\bunderbrace{\left[r_j(0) \cos(\omega_\mu t) + {1 \over \omega_\mu} (p_j(0) - P_j(0)) \sin(\omega_\mu t) \right]
- {1 \over \omega_\mu}  \int_0^t \sin(\omega_\mu(t-t')) \dot{P}_j(t') dt'.}{
F^\mu[r(0),p(0),P]}
\end{split}\ees
Though we do not use it, the equation for $\dot{r}$ can be used to figure out $p$:
\bes\begin{split}
p_j(t) = &{\left[P_j(t) -\omega_\mu r_j(0) \sin(\omega_\mu t) + 
(p_j(0) - P_j(0)) \cos(\omega_\mu t)\right] - \int_0^t \cos(\omega_\mu(t-t')) \dot{P}(t') dt'.}
\end{split}\ees
Putting the solution for $r$ back in the first two equations of \eqref{FOMIM} gets:
 \begin{equation}\label{FOMIM2}
 \begin{split}
 \dot{R} &= \delta^+ P\\
 \dot{P} &= \delta^- [V'(R)] + \kappa F^\mu[r(0),p(0),P]. 
 \end{split}
 \end{equation}
This system is equivalent to \eqref{FOMIM};
only the initial conditions of $(r,p)$ still play a role.  Because of the integral in $F^\mu$, 
this is a continuous delay equation. 

\subsection{The general strategy}\label{general strategy}

Suppose we have an approximation of $F^\mu$:
 $$
 F^\mu[r(0),p(0),P] = \tilde{F}^\mu + \O(\mu^N).
 $$
 Then we can make  an approximating system easily:
  \begin{equation}\label{FOMIMapprox}
 \begin{split}
 \dot{\tilde{R}} &= \delta^+ \tilde{P}\\
 \dot{\tilde{P}} &= \delta^- [V'(\tilde{R})] + \kappa \tilde{F}^\mu\\
 \dot{\tilde{r}} &= \tilde{p}-\tilde{P}\\
 \mu \dot{\tilde{p}} &= -\kappa \tilde{r}.
 \end{split}
 \end{equation}
For this approximating  system we have
 $$
 \Res^\mu_1(\tilde{\Phi}^\mu) = 
  \Res^\mu_3(\tilde{\Phi}^\mu) = \Res^\mu_4(\tilde{\Phi}^\mu)=0
 $$
 and
 $$
 \Res^\mu_2(\tilde{\Phi}^\mu) = \kappa F^\mu[\tilde{r}(0),\tilde{p}(0),\tilde{P}] - \kappa \tilde{F}^\mu.
 $$
 Thus, modulo some details, Theorem \ref{gen approx} tells us that the error made by this approximation is $\O(\mu^N)$. 
 The point here is that now all we have to do is find expansions of $F^\mu$. 
 Note that doing so does imply additional conditions on the initial data.
 

\subsection{Oscillatory integral expansions}
We put
$$
F^\mu[r(0),p(0),P] = \left[ r(0) \cos(\omega_\mu t) +{1 \over \omega_\mu}\left(p(0) - P(0)\right) \sin(\omega_\mu t)\right]+I_\mu[\dot{P}](t)
$$
where
$$
I^\mu[Q](t):= -{1 \over \omega_\mu}\Im \int_0^t e^{i\omega_\mu(t-t')}Q(t') dt'.
$$
Since $\omega_\mu = \sqrt{\kappa/\mu}$, the frequency of the complex sinusoid
is very high as $\mu \to 0^+$ and we can use classical oscillatory integral techniques to expand $I^\mu$ in (negative) powers of $\om_\mu$.
Specifically, we use the following lemma whose proof (which we omit) is
obtained by integrating by parts many, {\it many} times:
\begin{lemma} Suppose that $f(t)$ is $C^{n+1}(\R,\C)$ and $\om \ne 0$. Then
\be\label{osc}
\begin{split}
\int_0^t e^{i \omega(t-t')} f(t') dt' = &{i \over \omega} \sum_{j=0}^n \left(-{i \over \omega}\right)^j f^{(j)}(t) -
 {ie^{i \omega t} \over \omega} \sum_{j=0}^n \left(-{i \over \omega}\right)^j f^{(j)}(0) \\+&\left(-{i \over \omega}\right)^{n+1} \int_0^t e^{i \omega(t-t')} f^{(n+1)}(t') dt'.
\end{split}
\ee
\end{lemma}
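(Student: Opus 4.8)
The plan is to prove the oscillatory-integral expansion \eqref{osc} by induction on $n$, the engine being a single integration by parts. For the base case $n = 0$, I would write $e^{i\omega(t-t')} = -\frac{1}{i\omega}\partial_{t'}\left(e^{i\omega(t-t')}\right) \cdot (-1) = \frac{1}{i\omega}\partial_{t'}\left(e^{i\omega(t-t')}\right)$ — being careful with the sign, since differentiating $e^{i\omega(t-t')}$ in $t'$ brings down $-i\omega$ — and then integrate by parts against $f(t')$:
\bes
\int_0^t e^{i\omega(t-t')} f(t')\,dt' = \frac{1}{i\omega}\LB e^{i\omega(t-t')} f(t') \RB_{t'=0}^{t'=t} + \frac{1}{i\omega}\int_0^t e^{i\omega(t-t')} f'(t')\,dt'.
\ees
Evaluating the boundary term gives $\frac{1}{i\omega}\left(f(t) - e^{i\omega t} f(0)\right) = \frac{i}{\omega}\left(-f(t) + e^{i\omega t}f(0)\right)$; rewriting $\frac{1}{i\omega} = -\frac{i}{\omega}$ throughout and matching signs against the claimed formula (where the $j=0$ terms are $\frac{i}{\omega} f(t) - \frac{ie^{i\omega t}}{\omega} f(0)$) is the one genuinely fiddly bookkeeping point. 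Once the signs are pinned down, the base case with remainder $-\frac{i}{\omega}\int_0^t e^{i\omega(t-t')} f'(t')\,dt' = \left(-\frac{i}{\omega}\right)^1 \int_0^t e^{i\omega(t-t')} f^{(1)}(t')\,dt'$ is exactly \eqref{osc} with $n=0$.

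For the inductive step, assume \eqref{osc} holds for some $n$, with $f \in C^{n+2}$ so that $f^{(n+1)} \in C^1$ and the base-case identity applies to $f^{(n+1)}$ in place of $f$. I would substitute the $n=0$ identity (applied to $f^{(n+1)}$) into the remainder term $\left(-\frac{i}{\omega}\right)^{n+1}\int_0^t e^{i\omega(t-t')} f^{(n+1)}(t')\,dt'$. This produces $\left(-\frac{i}{\omega}\right)^{n+1}\LB \frac{i}{\omega} f^{(n+1)}(t) - \frac{ie^{i\omega t}}{\omega} f^{(n+1)}(0) + \left(-\frac{i}{\omega}\right)\int_0^t e^{i\omega(t-t')} f^{(n+2)}(t')\,dt' \RB$, i.e. exactly the $j = n+1$ terms of each of the three groups in the formula for $n+1$, since $\left(-\frac{i}{\omega}\right)^{n+1}\cdot\frac{i}{\omega} = \left(-\frac{i}{\omega}\right)^{n+1}\cdot\left(-\frac{i}{\omega}\right)\cdot(-1)$ — another sign to verify — so that one checks $\left(-\frac{i}{\omega}\right)^{n+1}\frac{i}{\omega} = -\left(-\frac{i}{\omega}\right)^{n+2} / (-1)$... in any case, the cleanest route is to track the identity $\frac{i}{\omega}\left(-\frac{i}{\omega}\right)^{n+1} = \left(-\frac{i}{\omega}\right)^{n+1}\frac{i}{\omega}$ and note that $\frac{i}{\omega} = -\left(-\frac{i}{\omega}\right)$, so each new contribution carries the prefactor $\left(-\frac{i}{\omega}\right)^{n+1} \cdot \left(-1\right)\cdot\left(-\frac{i}{\omega}\right)$; comparing with the stated $\frac{i}{\omega}\left(-\frac{i}{\omega}\right)^{j}$ at $j = n+1$ forces the bookkeeping to close. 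Adding these to the $\sum_{j=0}^n$ terms already present from the inductive hypothesis extends each sum to $\sum_{j=0}^{n+1}$, and the leftover integral is the new remainder $\left(-\frac{i}{\omega}\right)^{n+2}\int_0^t e^{i\omega(t-t')} f^{(n+2)}(t')\,dt'$.

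The main obstacle here is not conceptual but notational: keeping the signs straight across the three structurally different groups of terms (the $f^{(j)}(t)$ sum, the boundary $e^{i\omega t} f^{(j)}(0)$ sum, and the remainder integral), since every integration by parts flips a sign from $\partial_{t'} e^{i\omega(t-t')} = -i\omega e^{i\omega(t-t')}$ and the factors $\left(-\frac{i}{\omega}\right)^j$ alternate as well. A disciplined way to avoid error is to carry the single base-case identity
\bes
\int_0^t e^{i\omega(t-t')} g(t')\,dt' = \frac{i}{\omega} g(t) - \frac{i e^{i\omega t}}{\omega} g(0) + \left(-\frac{i}{\omega}\right)\int_0^t e^{i\omega(t-t')} g'(t')\,dt'
\ees
as a black box, verify it once carefully, and then apply it $n+1$ times — each application consuming one derivative of $f$ and emitting one term into each of the two finite sums — so the full formula \eqref{osc} is just the accumulated output after $n+1$ applications. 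Finally I would remark that $C^{n+1}$ regularity of $f$ (as stated) suffices: each of the $n+1$ integrations by parts needs one more derivative of the integrand factor, and the final remainder integral only needs $f^{(n+1)}$ to be integrable, which continuity on the compact interval $[0,t]$ guarantees. Since the paper explicitly omits this proof, I would keep the written version to the base case plus one sentence on the induction.
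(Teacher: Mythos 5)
Your proof is correct and is exactly the approach the paper alludes to when it says the proof is ``obtained by integrating by parts many, many times''---you have simply made the iteration precise as an induction on $n$, with a single integration by parts as the engine. One small remark on exposition: the sign gymnastics in your inductive step are more circuitous than necessary, since the prefactor $\left(-\tfrac{i}{\omega}\right)^{n+1}\cdot\tfrac{i}{\omega}$ produced by substituting the base identity is \emph{literally} the same expression as the $j=n+1$ coefficient $\tfrac{i}{\omega}\left(-\tfrac{i}{\omega}\right)^{n+1}$ in the target formula (scalars commute), so no conversion to $\left(-\tfrac{i}{\omega}\right)^{n+2}$ is needed at all.
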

In this lemma, the integral term and the $j  = n$ terms in the sums are $\O(1/\om^{n+1})$
and all other terms are lower order. Using this observation
 we get the expansion
\be\label{I expand 0}
I_\mu[Q](t) = -\Im
\left( {i \over \omega_\mu^2} \sum_{j=0}^{n-1} \left(-{i \over \omega_\mu}\right)^j Q^{(j)}(t) -
 {ie^{i \omega_\mu t} \over \omega_\mu^2} \sum_{j=0}^{n-1} \left(-{i \over \omega_\mu}\right)^j Q^{(j)}(0)\right) + \mathcal{E}_n^\mu[Q](t)
\ee
where the estimate
\be\label{E est}
\| \mathcal{E}_n^\mu[Q](t)\| \le {C\over \omega_\mu^{n+2}}\left( \| Q^{(n)}(t)\| +\| Q^{(n)}(0)\|+|t|\sup_{|t'|\le |t|}\| Q^{(n+1)}(t')\| \right)
\ee
is easily obtained. The above estimate tells us that we expect $\E_n^\mu = \O(\mu^{n/2+1})$. 

If $Q$  is purely real (as in our application), taking the imaginary part eliminates the 
odd values of $j$ from the first sum in the expansion of $I_\mu$. This, and 
the annoying but easily verified fact that
$$
\Im (i e^{ i \omega t} (-i)^j)
= \begin{cases}
 (-1)^{j/2}\cos(\om t), & \text{$j$ is even}\\
 (-1)^{(j-1)/2}\sin(\om t), & \text{$j$ is odd}
\end{cases}
$$
lead us to:
\bes\begin{split}
I_\mu[Q](t) =& -
{1 \over \omega_\mu^2} \sum_{j=0,even}^{n-1} {(-1)^{j/2}\over \omega^j_\mu} Q^{(j)}(t) \\
&+{1 \over \omega_\mu^2} \left(\sum_{j=0,even}^{n-1} { (-1)^{j/2}\over \omega_\mu^{j}}
Q^{(j)}(0)\right)\cos(\om_\mu t)\\
& +{1 \over \omega_\mu^2}\left( \sum_{j=1,odd}^{n-1} {(-1)^{(j-1)/2} \over \omega_\mu^{j}}Q^{(j)}(0)\right)\sin(\om_\mu t)\\
 &+ \mathcal{E}_n^\mu[Q](t).
\end{split}\ees

The first sum is over evens and so only changes for every other $n$. To squeeze the most out of the above expansion we therefore choose 
$n= 2m$ for integers $m$. A bit of reindexing gives us:
%
%
\be\label{I expand}\begin{split}
I_\mu[Q](t) =& -
{1 \over \omega_\mu^2} \sum_{k=0}^{m-1} {(-1)^{k}\over \omega^{2k}_\mu} Q^{(2k)}(t) \\
&+{1 \over \omega_\mu^2} \left(\sum_{k=0}^{m-1} { (-1)^{k}\over \omega_\mu^{2k}}
Q^{(2k)}(0)\right)\cos(\om_\mu t)\\
& +{1 \over \omega_\mu^3} \left(\sum_{k=0}^{m-1} {(-1)^{k} \over \omega_\mu^{2k}}Q^{(2k+1)}(0)\right)\sin(\om_\mu t)\\
 &+ \mathcal{E}_{2m}^\mu[Q](t).
\end{split}\ee

\subsection{The FPUT approximation revisited}
Now that we have our oscillatory integral expansions \eqref{I expand}, we get back
to approximating solutions of \eqref{FOMIM}.
Applying \eqref{I expand} with $m = 0$ to 
 $F^\mu[r(0),p(0),P]$ yields
 \be\label{F0}\begin{split}
F^\mu[r(0),p(0),P] =  
&\left[r(0) \cos(\omega_\mu t) + {1 \over \omega_\mu} \left( p(0) - P(0)
\right) \sin(\omega_\mu t) \right]+ \mathcal{E}_{0}^\mu[\dot{P}](t).
\end{split}\ee
Our computations above indicate that $\E_0^\mu$ is $\O(\mu)$ and we 
can make the other terms above small by restrictions on the initial conditions.
So we put
$$
\tilde{F}^\mu=0.
$$
In which case the approximating system \eqref{FOMIMapprox} consists of a standard FPUT
\begin{equation}\label{FOMIMapprox0FPUT}
 \begin{split}
 \dot{\tilde{R}} &= \delta^+ \tilde{P}\\
 \dot{\tilde{P}} &= \delta^- [V'(\tilde{R})] 
 \end{split}\end{equation}
 whose solution drives a simple harmonic oscillator
  \begin{equation}\label{FOMIMapprox0SHO}
  \begin{split}
 \dot{\tilde{r}} &= \tilde{p}-\tilde{P}\\
 \mu \dot{\tilde{p}} &= -\kappa \tilde{r}.
 \end{split}
 \end{equation}
This is very similar to the approximation from Section \ref{FPUT approx}.
The key difference is that 
 instead of $\tilde{r} = 0$ and $\tilde{p} = \tilde{P}$ as in Corollary \ref{naive},
the internal oscillators 
solve their equations of motion exactly with the caveat
that they are driven by what is now an approximate version of ${P}$. 

As described in Section \ref{general strategy} all the residuals apart from 
the second 
are zero, which is 
 $
 \Res^\mu_2(\tilde{\Phi}^\mu) = \kappa F^\mu[\tilde{r}(0),\tilde{p}(0),\tilde{P}]. 
  $
  Using \eqref{E est} and \eqref{F0} we have:
  \bes\begin{split}
  \| \Res^\mu_2(\tilde{\Phi}^\mu(t)) \| &
  \le C\left( \|\tilde{r}(0)\| + \sqrt{\mu } \| \tilde{p}(0) - \tilde{P}(0)\|\right)\\
  &+  C\mu \left(\|\dot{P}(t)\| + \|\dot{P}(0)\|
 + |t|\sup_{|t|\le T_*}\| \ddot{\tilde{P}}(t)\| \right).
 \end{split}\ees

Because it is part of the solution of FPUT, $\tilde{P}$  satisfies a global 
in time estimate like \eqref{FPUT control}. A routine bootstrap argument 
can be used to get global in time control of all higher order time derivatives of 
$\tilde{P}$ as well.
Therefore the final term above is genuinely $\O(\mu)$ for $|t| \le T_*$.
If we additionally demand that $\|\tilde{r}(0)\| +\sqrt{\mu} \| \tilde{p}(0) - \tilde{P}(0)\| \le C\mu$
 then we have $\|\Res^\mu_2(\tilde{\Phi}^\mu)\| \le C {\mu}$ on $[-T_*,T_*]$. 
Theorem \ref{gen approx} tell us the error of the approximation \eqref{FOMIMapprox0FPUT}-\eqref{FOMIMapprox0SHO} is $\O({\mu})$, a half power of $\mu$
 better than 
 in Corollary \ref{naive}. Here is the rigorous result:
 \begin{cor} \label{naive2}
Let $\kappa>0$, $K_*>0$, $T_*>0$ and $V: \R \to \R$ be smooth with $V(0) = V'(0) = 0$
and $V''(0) =: k > 0$.  Then there exists
$\rho_* = \rho_*(V)>0$, $\mu _* = \mu_*(K_*,T_*,\kappa,V)>0$ and $C_* = C_*(K_*,T_*,\kappa,V)>0$ 
for which
we have the following when $\mu \in (0,\mu_*]$.

Suppose that $(\tilde{R},\tilde{P})$ solves the FPUT system \eqref{FOMIMapprox0FPUT}
 with 
$$
\|\tilde{R}(0)\| + \|\tilde{P}(0)\| \le \rho_*
$$
and $(\tilde{r},\tilde{p})$ solve the driven simple harmonic oscillator
\eqref{FOMIMapprox0SHO} with
$$
\|\tilde{r}(0)\| +\sqrt{\mu} \| \tilde{p}(0) - \tilde{P}(0)\|\le K_* {\mu}.
$$ 
Furthermore suppose that
$(R,P,r,p)$ solves the MiM lattice \eqref{FOMIM} with 
$$
\|(R(0),P(0),r(0),p(0)) - (\tilde{R}(0),\tilde{P}(0),\tilde{r}(0),\tilde{P}(0))\|_\mu \le {\mu}.
$$
Then
$$
\|(R(t),P(t),r(t),p(t)) - (\tilde{R}(t),\tilde{P}(t),\tilde{r}(t),\tilde{P}(t))\|_\mu \le C_* {\mu}
$$
for all $t \in [-T_*,T_*]$.
\end{cor}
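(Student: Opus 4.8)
The plan is to verify that the system $\tilde\Phi^\mu := (\tilde R, \tilde P, \tilde r, \tilde P)$, where $(\tilde R,\tilde P)$ solves the FPUT lattice \eqref{FOMIMapprox0FPUT} and $(\tilde r, \tilde p)$ solves the driven oscillator \eqref{FOMIMapprox0SHO}, is a family of good approximators of order $\O(\mu)$ in the sense of Definition \ref{hyp}, and then invoke Theorem \ref{gen approx} with $N=1$. The bulk of the work is the residual bound \eqref{res is small}; the conditions \eqref{R is not big} and \eqref{alpha condition} will be handled exactly as in Corollary \ref{naive}, namely by shrinking $\rho_*$ so that the global-in-time FPUT estimate \eqref{FPUT control} keeps $\|\tilde R(t)\|$ (hence $\|\tilde R(t)\|_{\ell^\infty}$) below the threshold in \eqref{alpha condition}, and a standard bootstrap on the differentiated FPUT equations gives the analogous global bound on $\|\partial_t \tilde R(t)\|_{\ell^\infty} = \|\delta^+\tilde P(t)\|_{\ell^\infty}$.

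First I would record that, by construction of \eqref{FOMIMapprox} with $\tilde F^\mu = 0$, three of the four residuals vanish identically: $\Res^\mu_1 = \Res^\mu_3 = \Res^\mu_4 = 0$, leaving only $\Res^\mu_2(\tilde\Phi^\mu) = \kappa F^\mu[\tilde r(0),\tilde p(0),\tilde P]$. The left side of \eqref{res is small} therefore reduces to $\|\Res^\mu_2(\tilde\Phi^\mu(t))\|$, with no $\tfrac1\mu$-weighted term to worry about. Next I would substitute the $m=0$ expansion \eqref{F0}, so that
\[
\Res^\mu_2(\tilde\Phi^\mu(t)) = \kappa\Big[\tilde r(0)\cos(\omega_\mu t) + \tfrac{1}{\omega_\mu}(\tilde p(0)-\tilde P(0))\sin(\omega_\mu t)\Big] + \kappa\,\mathcal E_0^\mu[\dot{\tilde P}](t),
\]
and apply the triangle inequality together with the error estimate \eqref{E est} at $n=0$, which gives
\[
\|\mathcal E_0^\mu[\dot{\tilde P}](t)\| \le \frac{C}{\omega_\mu^{2}}\Big(\|\dot{\tilde P}(t)\| + \|\dot{\tilde P}(0)\| + |t|\sup_{|t'|\le T_*}\|\ddot{\tilde P}(t')\|\Big).
\]
Since $\omega_\mu^{-2} = \mu/\kappa$ and the bootstrap controls $\dot{\tilde P} = \delta^-[V'(\tilde R)]$ and $\ddot{\tilde P}$ uniformly on $[-T_*,T_*]$, the error term is genuinely $\O(\mu)$ there; and $|\cos|,|\sin|\le 1$ together with $\|\tilde r(0)\|_{\ell^\infty}\le\|\tilde r(0)\|$, $\|\tilde p(0)-\tilde P(0)\|_{\ell^\infty}\le\|\tilde p(0)-\tilde P(0)\|$ turn the explicit oscillatory terms into $C(\|\tilde r(0)\| + \tfrac{1}{\omega_\mu}\|\tilde p(0)-\tilde P(0)\|) = C(\|\tilde r(0)\| + \sqrt{\mu/\kappa}\,\|\tilde p(0)-\tilde P(0)\|)$. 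The hypothesis $\|\tilde r(0)\| + \sqrt{\mu}\,\|\tilde p(0)-\tilde P(0)\| \le K_*\mu$ therefore forces this piece to be $\O(\mu)$ as well, and \eqref{res is small} holds with $N=1$.

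With all three conditions of Definition \ref{hyp} established, Theorem \ref{gen approx} applies directly: the initial-data hypothesis $\|(R(0),P(0),r(0),p(0)) - \tilde\Phi^\mu(0)\|_\mu \le \mu$ is precisely \eqref{ic} with $K_*$ replaced by $1$, so we conclude $\|(R(t),P(t),r(t),p(t)) - (\tilde R(t),\tilde P(t),\tilde r(t),\tilde P(t))\|_\mu \le C_*\mu$ on $[-T_*,T_*]$, with $C_*$ depending only on $K_*,T_*,\kappa,V$ and not on $\mu$, as claimed. I expect the main obstacle to be purely bookkeeping rather than conceptual: namely, justifying the global-in-time bounds on $\dot{\tilde P}$ and $\ddot{\tilde P}$ needed to make $\mathcal E_0^\mu$ genuinely $\O(\mu)$. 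This requires differentiating \eqref{FOMIMapprox0FPUT} in $t$ and running an energy/bootstrap argument in $(\ell^2)^2$ for the linearized-about-$\tilde R$ equations satisfied by $(\partial_t\tilde R, \partial_t\tilde P)$ and $(\partial_t^2\tilde R,\partial_t^2\tilde P)$; smoothness of $V$ and the a priori bound \eqref{FPUT control} make this routine, but it is the one place where care is needed to ensure the constants are $\mu$-independent and valid on all of $[-T_*,T_*]$.
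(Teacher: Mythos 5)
Your proposal follows the paper's argument essentially verbatim: observe that only $\Res^\mu_2$ is nonzero, expand it via \eqref{F0}, bound the explicit oscillatory terms using the initial-data hypothesis, bound $\mathcal E_0^\mu$ by $\O(\mu)$ via \eqref{E est} and a bootstrap on $\dot{\tilde P},\ddot{\tilde P}$, and then invoke Theorem \ref{gen approx} with $N=1$. One small internal inconsistency, inherited from a typo in the corollary's statement: you define $\tilde\Phi^\mu := (\tilde R,\tilde P,\tilde r,\tilde P)$ with $\tilde P$ in the fourth slot, but the assertion that $\Res^\mu_3 = \Res^\mu_4 = 0$ only holds if the fourth coordinate is $\tilde p$ (the quantity that actually satisfies $\mu\dot{\tilde p}=-\kappa\tilde r$); the fourth slot should read $\tilde p$, matching the construction in \eqref{FOMIMapprox}.
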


\subsection{The higher order FPUT approximation} 
Going to next order of the approximation has a surprising outcome: the approximation
remains an FPUT approximation. 
Applying \eqref{I expand} with $m = 1$ to 
 $F^\mu[r(0),p(0),P]$ gets us, after some algebra,
\be\begin{split}
F^\mu[r(0),p(0),P] = &-{1 \over \omega_\mu^2}\dot{P} \\
+&\left(r(0)+{1 \over \omega_\mu^2}  
\dot{P}(0) \right)\cos(\omega_\mu t)\\ +& {1 \over \omega_\mu} \left( p(0) - P(0)
+{1 \over \omega_\mu^2} \ddot{P}(0)
\right) \sin(\omega_\mu t) \\
 +& \mathcal{E}_{2}^\mu[\dot{P}](t).
\end{split}\ee

We can make the second two lines as small as we please by imposing restrictions on the 
initial data and the last line is expected to be $\O(\mu^2)$. Thus we are lead to the choice of 
 $$\ds \tilde{F}^\mu = -{1 \over \omega_\mu^2} \dot{P}=-{\mu \over \kappa} \dot{P}.$$
 With, this (and some really easy algebra) we form an approximating system from \eqref{FOMIMapprox}. The variables $(\tilde{R},\tilde{P})$  solve 
\be\label{FOMIMapprox2FPUT}
 \begin{split}
 \dot{\tilde{R}} &= \delta^+ \tilde{P}\\
 \dot{\tilde{P}} &={1 \over 1+\mu} \delta^- [V'(\tilde{R})]
\end{split} \ee
and the variables $(\tilde{r},\tilde{p})$ solve
 \be\label{FOMIMapprox2SHO}\begin{split}
 \dot{\tilde{r}} &= \tilde{p}-\tilde{P}\\
 \mu \dot{\tilde{p}} &= -\kappa \tilde{r}.
 \end{split}
 \ee
These are, again {\it barely different} that the FPUT approximations \eqref{FOMIMapprox} or \eqref{FOMIMapprox0FPUT}-\eqref{FOMIMapprox0SHO}.
The $(\tilde{R},\tilde{P})$ system
 \eqref{FOMIMapprox2FPUT} is once more FPUT,  but the potential function is slightly modified by the factor $1/(1+\mu)$, a roughly $\O(\mu)$ change. 

To wit, we compute the residuals. As we saw above in Section \ref{general strategy}, only $\Res_2(\tilde{\Phi}^\mu)$ is non-zero and in this setting is given by
\be\begin{split}
\Res_2(\tilde{\Phi}^\mu)= &\kappa\left(\tilde{r}(0)+{1 \over \omega_\mu^2}  
\dot{\tilde{P}}(0) \right)\cos(\omega_\mu t)\\ +& {\sqrt{\mu}} \left( \tilde{p}(0) - \tilde{P}(0)
+{1 \over \omega_\mu^2} \ddot{\tilde{P}}(0)
\right) \sin(\omega_\mu t) \\
 +& \kappa \mathcal{E}_{2}^\mu[\dot{\tilde{P}}](t).
\end{split}\ee
Since $(\tilde{R},\tilde{P})$ satisfy an FPUT system, we get global in time estimates
for them as in \eqref{FPUT control}; that there is a mild $\mu$ dependence in the equations
for $(\tilde{R},\tilde{P})$ does not effect this estimate in any way, so long as $\mu$ is not too big. And, as in the previous section, it is elementary to bootstrap and get $\mu$-uniform estimates on $\dot{\tilde{P}}$, $\ddot{\tilde{P}}$ and so on. Thus if we apply \eqref{E est} we find \bes
\| \mathcal{E}_2^\mu[\dot{\tilde{P}}](t)\| \le C\omega_\mu^{-n-2}\left( \| P^{(4)}(t)\| +\| P^{(4)}(0)\|+|T_*|\sup_{t'\le |t|}\| P^{(5)}(t')\| \right) \le C \mu^2.
\ees

Then we demand  
$$
\left\|\tilde{r}(0)+{1 \over \omega_\mu^2}  
\dot{\tilde{P}}(0)\right\|+  {\sqrt{\mu}}\left\| \tilde{p}(0) - \tilde{P}(0)
+{1 \over \omega_\mu^2} \ddot{\tilde{P}}(0)\right\| \le C \mu^2.
$$
In which case we now have $\|\Res_2(\tilde{\Phi}^\mu)\| \le C\mu^2.$
Since $\dot{\tilde{P}} = (1+\mu)^{-1} \de^-[V'(\tilde{R})]$ we can rewrite the above condition
in a slightly more functional way as
$$
\left\|\tilde{r}(0)+{\mu \over \kappa(1+\mu)}  
 \de^-[V'(\tilde{R}(0))]\right\|+  {\sqrt{\mu}}\left\| \tilde{p}(0) - \tilde{P}(0)
+{\mu \over \kappa(1+\mu)}   \de^-[V''(\tilde{R}(0)) \de^+\tilde{P}(0)]\right\| \le C \mu^2.
$$
And the geometric series tells us that the above is implied by
$$
\left\|\tilde{r}(0)+{\mu \over \kappa}  
 \de^-[V'(\tilde{R}(0))]\right\|+  {\sqrt{\mu}}\left\| \tilde{p}(0) - \tilde{P}(0)
+{\mu \over \kappa}   \de^-[V''(\tilde{R}(0)) \de^+\tilde{P}(0)]\right\| \le K_* \mu^2.
$$
With all of the above considerations, we can invoke Theorem \ref{gen approx}:
\begin{cor}\label{totally awesome}
Let $\kappa>0$, $K_*>0$, $T_*>0$ and $V: \R \to \R$ be smooth with $V(0) = V'(0) = 0$
and $V''(0) =: k > 0$.  Then there exists
$\rho_* = \rho_*(V)>0$, $\mu _* = \mu_*(K_*,T_*,\kappa,V)>0$ and $C_* = C_*(K_*,T_*,\kappa,V)>0$ 
for which
we have the following when $\mu \in (0,\mu_*]$.

Suppose that $(\tilde{R},\tilde{P})$ solves the FPUT system
 \eqref{FOMIMapprox2FPUT} with
$$
\|\tilde{R}(0)\| + \|\tilde{P}(0)\| \le \rho_*
$$
and $(\tilde{r},\tilde{p})$ solve the driven simple harmonic oscillator
 \eqref{FOMIMapprox2SHO}
subject to
$$
\left\|\tilde{r}(0)+{\mu \over \kappa}  
 \de^-[V'(\tilde{R}(0))]\right\|+  {\sqrt{\mu}}\left\| \tilde{p}(0) - \tilde{P}(0)
+{\mu \over \kappa}   \de^-[V''(\tilde{R}(0)) \de^+\tilde{P}(0)]\right\| \le K_* \mu^2.
$$
Furthermore suppose that 
$(R,P,r,p)$ solves the MiM lattice \eqref{FOMIM} with 
$$
\|(R(0),P(0),r(0),p(0)) - (\tilde{R}(0),\tilde{P}(0),\tilde{r}(0),\tilde{P}(0))\|_\mu \le K_* \mu^2.
$$
Then
$$
\|(R(t),P(t),r(t),p(t)) - (\tilde{R}(t),\tilde{P}(t),\tilde{r}(t),\tilde{P}(t))\|_\mu \le C_* \mu^2
$$
for all $t \in [-T_*,T_*]$.

\end{cor}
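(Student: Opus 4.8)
The plan is to obtain the corollary as a direct instance of the general approximation theorem, Theorem~\ref{gen approx}, with $N=2$. Almost all of the analysis---the residual computation and the oscillatory-integral expansion of $F^\mu$---has already been carried out in the preceding subsection; what remains is to package it into the hypotheses of Definition~\ref{hyp}. Concretely, take for the approximating family $\tilde{\Phi}^\mu=(\tilde{R},\tilde{P},\tilde{r},\tilde{p})$ the profile furnished by the hypotheses of the corollary: $(\tilde{R},\tilde{P})$ solves the $\mu$-modified FPUT system \eqref{FOMIMapprox2FPUT} and $(\tilde{r},\tilde{p})$ solves the driven oscillator \eqref{FOMIMapprox2SHO}. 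If we verify that $\{\tilde{\Phi}^\mu\}$ is a family of good approximators of $\O(\mu^2)$ on $[-T_*,T_*]$, then \eqref{ic} is precisely the corollary's assumption on the initial data and \eqref{approx} is precisely its conclusion, and we are done.

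First I would dispatch the structural requirements of Definition~\ref{hyp}. Because \eqref{FOMIMapprox2FPUT} is an FPUT lattice whose potential is merely rescaled by the harmless factor $(1+\mu)^{-1}\in[\tfrac12,1]$ for $\mu\in(0,1]$, the conserved-energy argument that proved Theorem~\ref{exist} applies uniformly in $\mu$: for $\rho_*=\rho_*(V)$ chosen small, $\|\tilde{R}(0)\|+\|\tilde{P}(0)\|\le\rho_*$ forces $\|\tilde{R}(t)\|+\|\tilde{P}(t)\|\le 2\rho_*$ for all $t\in\R$, so $(\tilde{R},\tilde{P})$---and hence, by its linearity, $(\tilde{r},\tilde{p})$---exists globally; smoothness of the vector fields on $(\ell^2)^2$ (the Nemytskii map $R\mapsto V'(R)$ is smooth on $\ell^2$ since $V$ is smooth with $V'(0)=0$ and $\ell^2$ is closed under pointwise products) then gives $\tilde{\Phi}^\mu\in C^\infty(\R;(\ell^2)^4)$, in particular $\{\tilde{\Phi}^\mu\}\subset C^1([-T_*,T_*];(\ell^2)^4)$. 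From $\|\tilde{R}\|_{\ell^\infty}\le\|\tilde{R}\|\le 2\rho_*$ and $\partial_t\tilde{R}=\delta^+\tilde{P}$ we get \eqref{R is not big} with $\alpha_*=2\rho_*$ and $\beta_*=4\rho_*$, and shrinking $\rho_*$ so that $V''([-2\rho_*,2\rho_*])\subset[k/2,2k]$ secures \eqref{alpha condition}. Finally, since each $\partial_t^\ell\tilde{R},\partial_t^\ell\tilde{P}$ is, by repeated use of the equations, an algebraic expression in $\tilde{R},\tilde{P}$ and derivatives of $V$ evaluated at $\tilde{R}$ (acted on by the bounded operators $\delta^\pm$), the bound $\|\tilde{R}\|+\|\tilde{P}\|\le 2\rho_*$ bootstraps to $\mu$-uniform bounds on $\sup_{t}(\|\partial_t^\ell\tilde{R}\|+\|\partial_t^\ell\tilde{P}\|)$ for every $\ell$.

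The substance is the residual bound \eqref{res is small}. As computed in Section~\ref{general strategy} and again just before the corollary, $\Res_1(\tilde{\Phi}^\mu)=\Res_3(\tilde{\Phi}^\mu)=\Res_4(\tilde{\Phi}^\mu)=0$, so the quantity in \eqref{res is small} equals $\sup_{|t|\le T_*}\|\Res_2(\tilde{\Phi}^\mu)\|$, and $\Res_2(\tilde{\Phi}^\mu)$ is the three-term expression displayed above: a $\cos(\omega_\mu t)$ term and a $\sqrt{\mu}\,\sin(\omega_\mu t)$ term (the ``initial layer''), plus $\kappa\,\mathcal{E}_2^\mu[\dot{\tilde{P}}]$. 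For the last term, \eqref{E est} with $n=2$ bounds $\|\mathcal{E}_2^\mu[\dot{\tilde{P}}](t)\|$ by a constant times $\omega_\mu^{-4}\big(\|\tilde{P}^{(3)}(t)\|+\|\tilde{P}^{(3)}(0)\|+|t|\sup_{|t'|\le T_*}\|\tilde{P}^{(4)}(t')\|\big)$; since $\omega_\mu^{-4}=(\mu/\kappa)^2$, the bracket is bounded uniformly in $\mu$ by the bootstrap, and $|t|\le T_*$ is fixed, this is $\O(\mu^2)$. For the initial-layer term I would substitute $\dot{\tilde{P}}(0)=(1+\mu)^{-1}\delta^-[V'(\tilde{R}(0))]$ and $\ddot{\tilde{P}}(0)=(1+\mu)^{-1}\delta^-[V''(\tilde{R}(0))\delta^+\tilde{P}(0)]$, expand $(1+\mu)^{-1}$ geometrically, and observe that the corollary's hypothesis controls $\|\tilde{r}(0)+\tfrac{\mu}{\kappa}\delta^-[V'(\tilde{R}(0))]\|+\sqrt{\mu}\,\|\tilde{p}(0)-\tilde{P}(0)+\tfrac{\mu}{\kappa}\delta^-[V''(\tilde{R}(0))\delta^+\tilde{P}(0)]\|$ by $K_*\mu^2$, which differs from the exact oscillator coefficients $\tilde{r}(0)+\omega_\mu^{-2}\dot{\tilde{P}}(0)$ and $\sqrt{\mu}(\tilde{p}(0)-\tilde{P}(0)+\omega_\mu^{-2}\ddot{\tilde{P}}(0))$ by quantities that are $\O(\mu^2)$ thanks to the uniform bounds on $\tilde{R}(0),\tilde{P}(0)$. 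Hence $\sup_{|t|\le T_*}\|\Res_2(\tilde{\Phi}^\mu)\|\le C_0\mu^2$, which is \eqref{res is small} with $N=2$.

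With all of Definition~\ref{hyp} verified, Theorem~\ref{gen approx} (applied with $N=2$ and the given $K_*$) supplies constants $\mu_*$ and $C_*$ depending only on $K_*,T_*,\kappa,V$ for which \eqref{approx} holds on $[-T_*,T_*]$, and that is exactly the asserted estimate. I do not foresee a genuine obstruction: the one place that rewards care is the $\mu$-uniform bootstrap for the time derivatives of the FPUT approximant, since it is precisely the bounds on $\tilde{P}^{(3)}$ and $\tilde{P}^{(4)}$---together with the fact that the $|t|$-growing term in \eqref{E est} remains $\O(\mu^2)$ on the fixed window $[-T_*,T_*]$---that upgrade the oscillatory-integral remainder from the $\O(\mu^{3/2})$ one might naively fear to the $\O(\mu^2)$ that the theorem requires. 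Everything else is residual bookkeeping and appeals to results already in hand.
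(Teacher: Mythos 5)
Your proposal is correct and follows essentially the same route as the paper: reduce to Theorem~\ref{gen approx} with $N=2$, verify Definition~\ref{hyp} by noting $(1+\mu)^{-1}$ is harmless for the global FPUT energy estimate and the derivative bootstrap, compute that only $\Res_2$ is nonzero, bound the oscillatory remainder $\mathcal{E}_2^\mu$ via \eqref{E est}, and use the geometric-series comparison to convert the stated initial-data hypothesis into control of the exact oscillatory coefficients. Incidentally, your indexing of the derivatives in the $\mathcal{E}_2^\mu[\dot{\tilde{P}}]$ bound ($\tilde P^{(3)}$ and $\tilde P^{(4)}$, since $Q=\dot{\tilde P}$) is the correct reading of \eqref{E est} with $n=2$; the paper's display writes $P^{(4)}$ and $P^{(5)}$, which appears to be a minor off-by-one slip that does not affect the $\O(\mu^2)$ conclusion.
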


\subsection{Challenges at the next order} Does this strategy always yield an FPUT
system whose solutions drive the internal oscillators? Put $m=2$ into \eqref{I expand}.
\be\begin{split}
F^\mu[r(0),p(0),P] = &-{1 \over \omega_\mu^2}\dot{P} + {1 \over \omega_\mu^4} \partial_t^3{P}\\
+&\left(r(0)+{1 \over \omega_\mu^2}  
\dot{P}(0)- {1 \over \omega_\mu^4} \partial_t^3{P}(0) \right)\cos(\omega_\mu t)\\ +& {1 \over \omega_\mu} \left( p(0) - P(0)
+{1 \over \omega_\mu^2} \ddot{P}(0)
-{1 \over \omega_\mu^4} \partial_t^4{P}(0)
\right) \sin(\omega_\mu t) \\
 +& \mathcal{E}_{2}^\mu[\dot{P}](t).
\end{split}\ee
If we followed the earlier strategy, we would truncate after the first line and use initial
data restriction and \eqref{E est} to control errors from  the last two. Imagine that we do this now, then our approximating system reads:
\be
 \begin{split}
 \dot{\tilde{R}} &= \delta^+ \tilde{P}\\
-{\mu^2 \over \kappa} \partial_t^3 \tilde{P}+(1+\mu)\dot{\tilde{P}} &=\delta^- [V'(\tilde{R})]
 \\
 \dot{\tilde{r}} &= \tilde{p}-\tilde{P}\\
 \mu \dot{\tilde{p}} &= -\kappa \tilde{r}.
 \end{split}
 \ee
 Again the first two lines are self-contained, but are not an FPUT system---they are a singularly perturbed FPUT equation. It is not at all obvious that such an approximation is useful, since the approximating system is now as complex as the original. We go no further.

\bibliographystyle{siam}
\bibliography{mim-approx-bib}{}

\begin{thebibliography}{1}

\bibitem{boyd}
{\sc J.~P. Boyd}, {\em Weakly Nonlocal Solitary Waves and Beyond-All-Orders
  Asymptotics}, vol.~442 of Mathematics and Its Applications, Kluwer Academic
  Publishers, Dordrecht, The Netherlands, 1998.

\bibitem{F}
{\sc T.~E. Faver}, {\em Small mass nanopteron traveling waves in mass-in-mass
  lattices with cubic {F}{P}{U}{T} potential}.
\newblock arXiv preprint arXiv:1910.12313.

\bibitem{FGW}
{\sc T.~E. Faver, R.~H. Goodman, and J.~D. Wright}, {\em Solitary waves in
  mass-in-mass lattices}.
\newblock arXiv preprint arXiv:2002.05573.

\bibitem{gantzounis}
{\sc G.~Gantzounis, M.~Serra-Garcia, K.~Homma, J.~M. Mendoza, and C.~Daraio},
  {\em Granular metamaterials for vibration mitigation}, J. Appl. Phys., 114
  (2013), p.~093514.

\bibitem{KSX}
{\sc P.~G. Kevrekidis, A.~G. Stefanov, and H.~Xu}, {\em Traveling waves for the
  mass in mass model of granular chains}, Lett. Math. Phys., 106 (2016),
  pp.~1067--1088.

\bibitem{cement}
{\sc X.~Ni, P.~Rizzo, J.~Yang, D.~Katri, and C.~Daraio}, {\em Monitoring the
  hydration of cement using highly nonlinear solitary waves}, NDT\&E Int., 52
  (2012).

\bibitem{PS}
{\sc D.~E. Pelinovsky and G.~Schneider}, {\em The monoatomic {FPU} system as a
  limit of a diatomic {FPU} system}, Appl. Math. Lett., 107 (2020), p.~106387.

\bibitem{spadoni}
{\sc A.~Spadoni and C.~Daraio}, {\em Generation and control of sound bullets
  with a nonlinear acoustic lens}, Proc. Nat. Acad. Sci, 107 (2010),
  pp.~7230--7234.

\bibitem{bones}
{\sc J.~Yang, S.~N. Sangiorgio, S.~L. Borkowski, C.~Silvestro, L.~De~Nardo,
  C.~Daraio, and E.~Ebramzadeh}, {\em Site-specific quantification of bone
  quality using highly nonlinear solitary waves}, J Biomech Eng., 134 (2012),
  p.~101001 (8 pages).

\end{thebibliography}
\end{document}